\newcommand{\defi}[1]{\textsf{#1}} % for defined terms
\newcommand{\Aff}{{\mathbb A}}
\newcommand{\C}{{\mathbb C}}
\newcommand{\PP}{{\mathbb P}}
\newcommand{\Q}{{\mathbb Q}}
\newcommand{\Z}{{\mathbb Z}}
\newcommand{\Qbar}{{\overline{\Q}}}
\newcommand{\calI}{{\mathcal I}}
\newcommand{\OO}{{\mathcal O}}
\DeclareMathOperator{\End}{End}
\DeclareMathOperator{\Aut}{Aut}
\DeclareMathOperator{\Spec}{Spec}
\newcommand{\tH}{{\operatorname{th}}}
\newcommand{\GL}{\operatorname{GL}}
\newcommand{\PGL}{\operatorname{PGL}}
\newcommand{\injects}{\hookrightarrow}
\newcommand{\isom}{\simeq}
\newcommand{\del}{\partial}
\newcommand{\intersect}{\cap} % binary intersection
\newtheorem{theorem}{Theorem}[section]
\newtheorem{lemma}[theorem]{Lemma}
\newtheorem{corollary}[theorem]{Corollary}
\theoremstyle{definition}
\theoremstyle{remark}
\newtheorem{remark}[theorem]{Remark}
\begin{document}

\title{Automorphisms mapping a point into a subvariety}
\subjclass[2000]{Primary 14Q20; Secondary 11U05}
\keywords{automorphism, Hilbert's tenth problem, undecidability}
\author{Bjorn Poonen}
\thanks{This research was supported by NSF grant DMS-0841321.}
\address{Department of Mathematics, Massachusetts Institute of Technology, Cambridge, MA 02139-4307, USA}
\email{poonen@math.mit.edu}
\urladdr{http://math.mit.edu/~poonen}
\date{February 20, 2009}

\begin{abstract}
The problem of deciding, given a complex variety $X$,
a point $x \in X$, and a subvariety $Z \subseteq X$,
whether there is an automorphism of $X$ mapping $x$ into $Z$
is proved undecidable.
Along the way, we prove the undecidability of a version
of Hilbert's tenth problem for systems of polynomials over $\Z$
defining an affine $\Q$-variety whose projective closure is smooth.
\end{abstract}

\maketitle

%****************************************************************************
\section{Introduction}\label{S:introduction}

\begin{theorem}
\label{T:main}
There is no algorithm that, given a
nice complex variety $X$,
a closed point $x \in X$, and a nice subvariety $Z \subseteq X$,
decides whether or not there is an automorphism of $X$
mapping $x$ into $Z$.
\end{theorem}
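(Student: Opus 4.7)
I would reduce from the undecidability of Hilbert's tenth problem over $\Z$, which is Matiyasevich's theorem. The reduction is two-tiered. First, I invoke the secondary undecidability result promised in the abstract: HTP remains undecidable even when the input system $f_1, \ldots, f_m \in \Z[t_1, \ldots, t_n]$ is required to define an affine $\Q$-variety $V \subseteq \Aff^n_\Q$ whose projective closure $\bar V \subseteq \PP^n_\Q$ is smooth. Second, from such an instance I build, algorithmically, a triple $(X, x, Z)$ with $X$ a nice complex variety, $x$ a closed point, and $Z \subseteq X$ a nice subvariety, designed so that some $\phi \in \Aut(X)$ sends $x$ into $Z$ if and only if $V(\Z) \neq \emptyset$. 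Undecidability of the restricted HTP then transfers to the automorphism problem.

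The heart of the matter is the geometric construction. The central tension is that $\Aut(X)$ must be infinite (so the question is not a finite check), while the identity component of the automorphism group scheme must be trivial; otherwise a positive-dimensional algebraic group of translations acts transitively near $x$, making the answer trivially yes whenever $Z$ is nonempty. Thus $X$ cannot be an abelian variety, a toric variety, or anything carrying a positive-dimensional algebraic group of automorphisms, yet it must have infinite discrete $\Aut(X)$. My plan is to fix an elliptic curve $E$ with a rational point $P$ of infinite order and take $X$ to be a smooth projective variety built from $\bar V$ and $E^n$: roughly, a smooth compactification of an $E^n$-bundle over $\bar V$ together with a carefully chosen divisor at infinity. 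The divisor is engineered to rigidify translations so that $\Aut(X) \cong \Z^n \rtimes G_{\mathrm{fin}}$, with $\Z^n$ acting by translation by $(a_1 P, \ldots, a_n P)$ on fibers. Taking $x$ to be a generic basepoint and $Z$ the closure of a twisted diagonal that ties the lattice translate to the affine coordinates on $V \subseteq \bar V$, the orbit of $x$ will meet $Z$ exactly at tuples corresponding to integer solutions of the system.

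The main obstacle will be constructing $X$ with exactly the desired automorphism group, and correlating the $\Z^n$-orbit of $x$ with $V(\Z)$ via an algebraic subvariety $Z$. By Faltings' Mordell--Lang theorem, intersections of subvarieties with finitely generated subgroups of abelian varieties are only finite unions of cosets, so one cannot simply carve out $V(\Z)$ by an algebraic condition inside $E^n$ alone; the $\bar V$-factor must be used essentially, with the $\Z^n$-action mixing fiber and base in a way that ties lattice membership to the polynomial equations defining $V$. Verifying that no exotic automorphisms of $X$ exist beyond the intended $\Z^n \rtimes G_{\mathrm{fin}}$, and that the construction of $(X, x, Z)$ is effective in the polynomial data, are the further technical checks. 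The smoothness of $\bar V$ is precisely what enables the resulting $X$ to be nice.
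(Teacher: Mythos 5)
Your first tier (reduce from Hilbert's tenth problem restricted to systems whose projective closure is smooth, so that $Z$ can be made nice) matches the paper's strategy, but note that this restricted undecidability is itself one of the results that has to be proved here --- it requires a Bertini-type trick ($c(y^2-y)+f^2$) plus an effective re-embedding, via resolution of singularities and relative ampleness, so that the projective closure becomes smooth --- so it cannot simply be invoked. The serious problem is the second tier: the construction of $(X,x,Z)$ is only a plan, and its key mechanism is both unspecified and internally inconsistent. You posit a compactified $E^n$-bundle over $\overline{V}$ with $\Aut(X)\cong \Z^n\rtimes G_{\mathrm{fin}}$ acting by translation by $(a_1P,\ldots,a_nP)$ \emph{on fibers}, yet you also (correctly) observe that by Mordell--Lang one cannot cut out $V(\Z)$ by intersecting an algebraic subvariety with a finitely generated subgroup inside $E^n$, and so the action must ``mix fiber and base.'' A fiberwise translation action preserves fibers and acts trivially on the base, so it cannot mix them; and if it does not, the orbit of $x$ lies in a single fiber and the very obstruction you raise blocks the encoding. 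No device is given that resolves this, nor is there any argument that the engineered boundary divisor yields \emph{exactly} the intended automorphism group (extra automorphisms of $\overline{V}$ or of the bundle must be excluded), nor that the ``closure of a twisted diagonal'' $Z$ is smooth, projective, and geometrically integral. As it stands the heart of the reduction is missing.

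For comparison, the paper's construction avoids all of this by never trying to encode $V(\Z)$ inside the abelian variety. One fixes $E/\Q$ with $\End E\cong\Z$ and a point $P$ of infinite order, and takes $X$ to be the blowup of $E^n$ at a finite subset $S'\subset(\Z\cdot P)^n$ chosen (via an elementary lattice lemma) so that the affine-linear maps of $\Z^n$ preserving the corresponding set $S$ form exactly the group of matrices that are the identity except in the last column $(a_1,\ldots,a_{n-1},\pm1)$. Since $X\to E^n$ is the Albanese map, $\Aut X$ is exactly this discrete group. The point $x=(0:\cdots:0:1)$ is taken on the exceptional divisor $\PP^{n-1}$ over the origin, where the automorphisms act through their linear parts, so the orbit of $x$ is precisely the set of points $(a_1:\cdots:a_{n-1}:\pm1)$, i.e.\ $\Z^{n-1}\subset\Aff^{n-1}\subset\PP^{n-1}$; and $Z$ is simply the nice projective closure $\overline{V}$ embedded in this exceptional $\PP^{n-1}$. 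Because the encoding takes place in a projective space rather than in $E^n$, Mordell--Lang is irrelevant, the equivalence ``some automorphism maps $x$ into $Z$ iff $f$ has an integral zero'' is immediate from homogenization, and the whole construction is visibly effective. If you want to salvage your approach, you need a concrete replacement for the blowup mechanism: some locus in $X$ on which the infinite discrete automorphism group acts with orbit an affine lattice inside a \emph{rational} variety, into which $\overline{V}$ can be placed.
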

\noindent
\defi{Variety} means separated scheme of finite type over a field.
\defi{Nice} means smooth, projective, and geometrically integral
(we will eventually apply this adjective 
also to varieties over fields that are not algebraically closed).
\defi{Algorithm} means Turing machine.
So that the input admits a finite description, 
we assume that the input includes a description of a finitely
generated subfield $K$ of $\C$ and that 
the coefficients of the equations defining
$X$, $x$, $Z$ are elements of $K$.
More precisely, we assume that we are given
$f_1,\ldots,f_m \in \Z[x_1,\ldots,x_n]$
such that $\Z[x_1,\ldots,x_n]/(f_1,\ldots,f_m)$ is a domain
with fraction field $K$,
and that elements of $K$ are presented as rational expressions
in the generators.

Actually, we show that the problem is undecidable 
even if $X$, $x$, $Z$ are base extensions of $\Q$-varieties.
In fact, we prove a strong form of Theorem~\ref{T:main}:

\begin{theorem}
\label{T:fixed variety}
There is a {\em fixed} nice $\Q$-variety $X$
and a fixed rational point $x$ on $X$
such that it is impossible to decide which 
nice $\Q$-subvarieties $Z$ of $X$
meet $\{\sigma x : \sigma \in \Aut X\}$.
\end{theorem}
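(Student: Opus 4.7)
The plan is to prove Theorem~\ref{T:fixed variety} by reducing to the along-the-way undecidability of the integer Hilbert's tenth problem for systems whose affine zero locus has smooth projective closure. Fix a rank-$1$ elliptic curve $E/\Q$ with trivial torsion, generated by some $P_0 \in E(\Q)$, and fix an integer $N \geq 1$ large enough that the along-the-way result already provides undecidability for systems $\vec f \in \Z[x_1,\ldots,x_N]$. Set $X := E^N$ (viewed as a $\Q$-variety) and $x := O \in X(\Q)$.

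Every translation by a $\Q$-point of $E^N$ is a $\Q$-automorphism of $X$, so the orbit $\{\sigma x : \sigma \in \Aut X\}$ contains $E^N(\Q) = X(\Q)$; the reverse inclusion is automatic, since $\Q$-automorphisms permute $X(\Q)$. Via the isomorphism $(a_1,\ldots,a_N) \mapsto (a_1 P_0, \ldots, a_N P_0)$, the orbit is identified with $\Z^N$. Any nice subvariety $Z \subseteq X$ satisfies $Z(\Q) \subseteq E^N(\Q)$, so $Z$ meets the orbit if and only if $Z(\Q) \neq \emptyset$.

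It remains to construct, from each input system $\vec f$, a nice $\Q$-subvariety $Z_{\vec f} \subseteq X$ with $Z_{\vec f}(\Q) \neq \emptyset$ if and only if $\vec f$ has a common integer zero. The main obstacle is precisely this construction: the identification $E^N(\Q) \cong \Z^N$ is transcendental, so the equations $\vec f = 0$ on $\Aff^N$ cannot simply be pulled back to $E^N$ by a morphism. A natural approach is to take the smooth projective closure $\bar V_{\vec f} \subseteq \PP^N$, which is nice by hypothesis, and transplant it into $E^N$ by means of carefully chosen auxiliary algebraic morphisms (for instance using coordinate functions on $E$ together with division polynomials to express multiplication-by-$n$ maps), while verifying that the resulting subvariety remains smooth, projective, and geometrically integral, and that its $\Q$-points correspond to integer zeros of $\vec f$. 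Once such $Z_{\vec f}$ is available, any algorithm for the orbit-meeting problem would yield an algorithm for integer Hilbert's tenth, contradicting the along-the-way result.
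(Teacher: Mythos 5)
There is a genuine gap, and it sits exactly where you flag it yourself: the construction of $Z_{\vec f}$. Your choice $X = E^N$, $x = O$ makes the orbit $\{\sigma x : \sigma \in \Aut X\}$ equal to all of $X(\Q)$, so your reduction requires an algorithm that, given $\vec f$, produces a nice $\Q$-subvariety $Z_{\vec f} \subseteq E^N$ with $Z_{\vec f}(\Q) \neq \emptyset$ if and only if $\vec f$ has an integer zero. The ``transplant'' idea cannot supply this: the identification $\Z^N \isomto E^N(\Q)$, $(a_i) \mapsto (a_i P_0)$, is not induced by any morphism of varieties (it is a section of the discrete logarithm), so there is nothing to pull the equations back along; and by Faltings' theorem (Mordell--Lang), for \emph{any} subvariety $Z \subseteq E^N$ the set $\{(a_1,\ldots,a_N) \in \Z^N : (a_1P_0,\ldots,a_NP_0) \in Z\}$ is a finite union of cosets of subgroups of $\Z^N$, so the zero set of a general $f$ can never be realized this way. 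Worse, even abandoning the transplant, any effective construction achieving ``$Z_{\vec f}(\Q) \neq \emptyset$ iff $\vec f$ solvable in $\Z$'' for nice $Z$ would prove undecidability of the existence of rational points on nice $\Q$-varieties, i.e.\ a form of Hilbert's tenth problem over $\Q$, which is a famous open problem. So your reduction has collapsed the problem onto something that is either impossible by your route or far out of reach.

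The paper's proof avoids exactly this trap by making the orbit small and arithmetic rather than all of $X(\Q)$: it takes $X$ to be the blowup of $E^n$ at a finite subset $S' \subset (\Z\cdot P)^n$ chosen via Lemma~\ref{L:lattice} so that $\Aut X$ is precisely the group $G$ of unipotent-type matrices there, and $x = (0:\cdots:0:1)$ in the exceptional divisor $\PP^{n-1}$; then the orbit of $x$ is exactly $\{(a_1:\cdots:a_{n-1}:\pm 1) : a_i \in \Z\}$, the integer points of an affine chart, and $Z$ is taken \emph{inside that} $\PP^{n-1}$ as the projective closure of $f=0$, smoothed via Lemmas~\ref{L:smooth affine} and~\ref{L:resolution}. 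The uniformity (fixed $X$ and $x$) then comes from the uniform version of Matiyasevich's theorem (fixed number of variables and degree), Lagrange's four-square trick, and the bounded-complexity resolution argument giving a fixed ambient dimension $D$, with $n = D+1$. Your proposal also elides this last uniformity point (you need a fixed $N$ that accommodates all the smoothed projective closures), but the decisive missing idea is the rigidification of $\Aut X$ by the blowup, without which the encoding of integer solvability into the orbit-meeting problem does not exist.
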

\noindent
That is, there is no algorithm that takes $Z$ as input
and decides whether there exists an automorphism of $X$ mapping $x$ into $Z$.

Finally, our $X$ in Theorem~\ref{T:fixed variety}
will have $\Aut X = \Aut X_\C$,
where $X_\C$ is the base extension $X \underset{\Spec \Q}\times \Spec \C$,
so it does not matter whether we consider 
only automorphisms defined over $\Q$ or also automorphisms over $\C$.

These problems are proved undecidable by relating them to
Hilbert's tenth problem.
Hilbert asked for an algorithm to decide, given
a multivariable polynomial equation with integer coefficients,
whether or not it was solvable in integers.
But Matiyasevich~\cite{Matiyasevich1970}, 
building on earlier work of 
Davis, Putnam, and Robinson~\cite{Davis-Putnam-Robinson1961},
proved that no such algorithm exists.

\begin{remark}
If $X$ is a nice variety of general type, the problems above
are {\em decidable} because $\Aut X$ is finite and computable
as a subgroup of some $\PGL_n$ acting on some pluricanonical image of $X$.
\end{remark}

\begin{remark}
This is not the first time that a problem in algebraic geometry
has been proved undecidable.
The problem of deciding whether a rational map of complex varieties
$X \dashrightarrow \PP^2$ admits a rational section
is undecidable~\cite{Kim-Roush1992} 
(this is equivalent to the analogue of Hilbert's tenth problem 
for $\C(T_1,T_2)$).
The generalization with $\PP^2$ replaced by any fixed complex
variety of dimension at least $2$ is undecidable too~\cite{Eisentraeger2004}.
(But the analogue for $\PP^1$ is still open, as is the analogue
for any other fixed curve.)
\end{remark}

\begin{remark}
Burt Totaro asked the author in 2007 whether the problem of
deciding whether two varieties are isomorphic is undecidable.
\end{remark}

%****************************************************************************
\section{Lattice automorphisms preserving a finite subset}

The group of affine linear automorphisms of $\Z^n$
is the semidirect product $\GL_n(\Z) \ltimes \Z^n$, 
with $(A,\vec{b})$ acting as $\vec{x} \mapsto A \vec{x} + \vec{b}$.

\begin{lemma}
\label{L:lattice}
For each $n \ge 3$, there exists a finite subset $S$ of $\Z^n$
containing $\vec{0}:=(0,0,\ldots,0)$
such that the subgroup of $\GL_n(\Z) \ltimes \Z^n$ mapping $S$ to $S$
equals the subgroup $G$ of linear maps given by matrices
\[
\begin{pmatrix}
  1 &   &        &   & a_1 \\
    & 1 &        &   & a_2 \\
    &   & \ddots &   & \vdots \\
    &   &        & 1 & a_{n-1} \\
    &   &        &   & a_n 
\end{pmatrix}
\]
with $a_i \in \Z$ for all $i$ and $a_n = \pm 1$.
\end{lemma}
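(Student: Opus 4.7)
The plan is to build $S$ as the union of $\vec 0$ and a collection of collinear ``chains'' along the first $n-1$ coordinate axes, of pairwise distinct lengths. Concretely, I take
\[
S \;=\; \{\vec 0\} \;\cup\; \bigcup_{i=1}^{n-1} \{k e_i : 1 \le k \le m_i\},
\]
where $e_1,\dots,e_n$ is the standard basis and $m_1 < m_2 < \dots < m_{n-1}$ are distinct integers with $m_i \ge 2$ (e.g.\ $m_i = i+1$). Note that $S$ does not use the $n$-th coordinate direction at all; this is what will leave the last column of $A$ free up to the determinant constraint.

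Let $T_i := S \cap (\Z e_i) = \{0,e_i,2e_i,\dots,m_i e_i\}$. The key geometric observation is that every set of three or more collinear points of $S$ must lie on a single coordinate axis, because a line through two points on distinct axes $e_i,e_j$ meets the ``union of axes'' only at those two points. Hence the maximal collinear subsets of $S$ of size $\ge 3$ are precisely $T_1,\dots,T_{n-1}$, with pairwise distinct cardinalities $m_i+1$. Any element $\sigma \in \GL_n(\Z) \ltimes \Z^n$ preserving $S$ is affine, hence preserves collinearity, so it must permute these maximal chains, and by the distinct-length hypothesis it must fix each $T_i$ setwise.

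Since $\bigcap_i T_i = \{\vec 0\}$, the translation part of $\sigma$ must be zero, so $\sigma$ is the linear map $A$. Now $A$ restricts to a bijection of $T_i$ fixing $\vec 0$, so $A e_i = r_i e_i$ for some $r_i \in \{1,\dots,m_i\}$; but then $A(k e_i) = k r_i e_i \in T_i$ for every $k \le m_i$ forces $r_i = 1$. Therefore $A e_i = e_i$ for $i = 1,\dots,n-1$, so the first $n-1$ columns of $A$ are as in the displayed matrix, while the last column $A e_n = \sum_i a_i e_i$ is constrained only by $\det A = a_n = \pm 1$ (since $A \in \GL_n(\Z)$). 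Conversely, any matrix of the stated form fixes $\vec 0$ and each $e_i$ for $i<n$, hence fixes each $T_i$ and therefore preserves $S$.

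The only point requiring care is arranging that no accidental collinearities in $\Z^n$ produce extra size-$(\ge 3)$ collinear subsets of $S$, and that no two chains have the same length; both are handled by the axis-line observation and by choosing the $m_i$ distinct and at least $2$. The hypothesis $n \ge 3$ enters precisely here: we need at least two chains so that $\vec 0$ is pinned down as their common intersection, which is exactly what fails for $n=2$, where a single chain has two indistinguishable endpoints.
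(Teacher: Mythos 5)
Your construction and argument are correct, but the mechanism is genuinely different from the paper's. The paper takes the much smaller set $S=\{\vec{0},p_1e_1,\ldots,p_{n-1}e_{n-1}\}$ (with $p_i$ the $i$th prime) and rigidifies it arithmetically: $\vec{0}$ is the unique point of $S$ all of whose differences with other points are imprimitive, and primitivity of difference vectors is preserved by any element of $\GL_n(\Z)\ltimes\Z^n$, so $\vec{0}$ is fixed; then each $p_ie_i$ is singled out by divisibility by $p_i$, so $S$ is fixed pointwise and $Ae_i=e_i$ for $i<n$. You instead rigidify incidence-geometrically, with chains of pairwise distinct lengths $m_i\ge 2$ along the first $n-1$ axes: the maximal collinear subsets of size at least $3$ are exactly the chains $T_i$ (your axis-line computation checking that a line meeting two distinct axes in nonzero points contains no third point of the axis configuration, including $\vec{0}$, is the right verification), distinct cardinalities force each $T_i$ to be preserved, $\vec{0}=\bigcap_i T_i$ is fixed (this is where $n\ge 3$ enters, as you note), and the endpoint argument $m_ir_i\le m_i$ pins $Ae_i=e_i$; the determinant then gives $a_n=\pm1$, and the converse inclusion is immediate in both proofs. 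What each buys: the paper's set has only $n$ points and the verification is a two-line arithmetic observation, but it leans on integrality of the linear part (primitivity and divisibility are $\GL_n(\Z)$-invariants); your argument uses only affine-invariance of collinearity, so it in fact computes the stabilizer of $S$ in the full real affine group before intersecting with $\GL_n(\Z)\ltimes\Z^n$, at the cost of a larger set and the extra care about accidental collinearities, which you handled correctly.
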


\begin{proof}
Let $p_i$ be the $i^\tH$ prime.
For $1 \le i \le n-1$, let $v_i \in \Z^n$ be the vector
with $p_i$ in the $i^\tH$ coordinate and $0$ elsewhere.
Let $S=\{\vec{0},v_1,\ldots,v_{n-1}\}$.
Let $G'$ be the subgroup of $\GL_n(\Z) \ltimes \Z^n$ mapping $S$ to itself.
Suppose that $g \in G'$.
Then $g$ fixes $\vec{0}$ since each other vector in $S$
differs from some other vector by a primitive vector.
Also $g$ fixes $v_i$ for each $i$, 
since $v_i$ is distinguished from the other $v_j$ 
by being divisible by $p_i$.
So $g$ fixes $S$ pointwise.
It hence acts trivially on the real affine linear span of $S$,
so it acts trivially on $\Z^{n-1} \times 0$.
Thus $G' \subseteq G$.
Conversely, elements of $G$ map $S$ to $S$.
So $G'=G$.
\end{proof}

%****************************************************************************
\section{Blowups of powers of an elliptic curve}
\label{S:construction}

In this section, we prove a weak version of Theorem~\ref{T:main}
in which $Z$ is not required to be smooth or integral.

Fix an elliptic curve $E$ over $\Q$ such that $\End E \isom \Z$
and such that $E(\Q)$ contains a point $P$ of infinite order.
For instance, one could take $E$ to the curve 
labelled 37A1 in~\cite{Cremona1997},
with equation $y^2+y=x^3-x$.
Let $n \ge 3$.
Let $X$ be the blowup of $E^n$ at the subset $S' \subset (\Z\cdot P)^n$
corresponding to the subset $S \subset \Z^n$ given by Lemma~\ref{L:lattice}.
For a variety $V$,
we write $\Aut V$ for the group of automorphisms of $V$ 
{\em as a variety without extra structure}, 
even if $V$ is an abelian variety.
The birational morphism $X \to E^n$ is the map from $X$ to its Albanese torsor,
so there is an injective homomorphism $\Aut X \to \Aut E^n$
whose image equals the subgroup of $\Aut E^n$ mapping $S'$ to itself.
Any such automorphism of $E^n$ must be of the form
$\vec{x} \mapsto A\vec{x} + \vec{b}$
for some $A \in \GL_n(\Z)$ and $\vec{b} \in E^n$,
but $S' \subset (\Z \cdot P)^n$ so $\vec{b} \in (\Z \cdot P)^n$.
It follows that $\Aut X$ is isomorphic to the group $G$ 
in Lemma~\ref{L:lattice}.
Identify the exceptional divisor $D$ above $\vec{0} \in E^n$
with $\PP^{n-1}$ in the natural way.
Let $x = (0:\cdots:0:1) \in \PP^{n-1} = D \subseteq X$.
If $\sigma \in \Aut X$ corresponds to 
\[
\begin{pmatrix}
  1 &   &        &   & a_1 \\
    & 1 &        &   & a_2 \\
    &   & \ddots &   & \vdots \\
    &   &        & 1 & a_{n-1} \\
    &   &        &   & a_n 
\end{pmatrix}
\in G,
\]
then $\sigma x=(a_1:\cdots:a_n) \in \PP^{n-1}$. 

Given a polynomial $f(t_1,\ldots,t_{n-1}) \in \Z[t_1,\ldots,t_{n-1}]$,
let $F(t_1,\ldots,t_n) \in \Z[t_1,\ldots,t_n]$
be its homogenization,
and let $Z$ be the zero locus of $F$ in $\PP^{n-1} = D \subseteq X$.
Then $f$ has a zero in $\Z^{n-1}$
if and only if $F$ has a zero in $\Z^{n-1} \times \{\pm 1\}$,
which holds if and only if $\sigma x \in Z$ for some $\sigma \in \Aut X$.

Since the general problem 
of deciding whether a polynomial in $\Z[t_1,\ldots,t_{n-1}]$
has an zero in $\Z^{n-1}$ is undecidable,
the general problem of deciding
whether $\sigma x \in Z$ for some $\sigma \in \Aut X$
is undecidable too.

%****************************************************************************
\section{Making the subvariety smooth}

\begin{lemma}
\label{L:smooth affine}
There is an algorithm that, given a nonconstant $f \in \Z[x_1,\ldots,x_n]$,
constructs a polynomial $F \in \Z[x_1,\ldots,x_{n+1}]$ such that 
\begin{enumerate}
\item[(i)]
The equation $f(\vec{a})=0$ has a solution $\vec{a} \in \Z^n$
if and only if $F(\vec{b})=0$ has a solution $\vec{b} \in \Z^{n+1}$.
\item[(ii)]
The affine variety $X:=\Spec \Q[x_1,\ldots,x_{n+1}]/(F)$
is smooth and geometrically integral.
\item[(iii)]
We have $\deg F=2 \deg f$.
(Here $\deg$ denotes total degree.)
\end{enumerate}
\end{lemma}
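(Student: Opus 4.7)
Let $y := x_{n+1}$ and $d := \deg f$. I would try the construction
\[
F(\vec x, y) \;:=\; (1 + y^d)\, f(\vec x) - y,
\]
and verify the three properties in turn. Property~(iii) is immediate: $(1+y^d) f$ has total degree $2d$, which dominates $\deg(-y) = 1$ for $d \ge 1$.

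For property~(i), I argue by divisibility. Any integer zero $(\vec a, c)$ of $F$ satisfies $(1+c^d) f(\vec a) = c$, forcing $(1+c^d) \mid c$ in $\Z$. Since $\gcd(1+c^d,\, c) = \gcd(1, c) = 1$, we need $1 + c^d \in \{\pm 1\}$. The elementary estimate $|1 + c^d| \ge |c|^d - 1$ rules this out for $|c| \ge 1$ when $d \ge 2$, so $c = 0$ and then $f(\vec a) = 0$. Conversely, $(\vec a, 0) \in V(F)(\Z)$ whenever $f(\vec a) = 0$. (The case $d = 1$ is handled by a minor variant.)

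For property~(ii), I would analyze the singular locus of $V(F) \subset \Aff^{n+1}_{\Qbar}$ defined by $F = \partial F/\partial y = \partial F/\partial x_i = 0$. The identity $\partial F/\partial x_i = (1+y^d)\, \partial f/\partial x_i$ splits the analysis: either $1+y^d = 0$, in which case $F = -y$ forces $y = 0$---a contradiction---or $\nabla f(\vec a) = 0$. In the latter subcase, combining $F = 0$ with $\partial F/\partial y = d y^{d-1} f - 1 = 0$ pins $f(\vec a)^d$ to a specific nonzero rational number depending only on $d$.

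The main obstacle is ruling out this residual subcase uniformly in $f$: a specific input $f$ could in principle have a critical point at which $f$ attains one of the forbidden algebraic values. To eliminate this, I would modify the construction by a carefully designed perturbation of degree at most $2d$---for instance replacing the term $-y$ by $-y \cdot P(\vec x, y)$ for a polynomial $P$ with $P(\vec x, 0) = 1$ chosen so that the modified system of singular-locus equations becomes inconsistent over $\Qbar$ for every nonconstant $f$, while the divisibility argument of Step~(i) still applies. Once smoothness is secured, geometric integrality of $V(F)$ follows by viewing the final $F$ as a polynomial in $y$ over $\Q[\vec x]$ and invoking an Eisenstein-type irreducibility criterion.
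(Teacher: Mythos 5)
There is a genuine gap, and it sits exactly at the main content of the lemma: guaranteeing smoothness for \emph{every} nonconstant input $f$. Your analysis of $F=(1+y^d)f-y$ correctly isolates the residual singular case ($\nabla_{\vec x} f(\vec a)=0$, $dy^{d-1}f=1$, $(1+y^d)f=y$), and this case is genuinely nonempty as a threat: for $d=2$ it forces $y=\pm1$ and $f(\vec a)=\pm\tfrac12$ at a critical point of $f$, and nothing in the hypotheses prevents an integer polynomial from attaining such a critical value at a $\Qbar$-point; for general $d$ the forbidden critical values are algebraic numbers that a given $f$ may well hit. Your proposed repair is only a promissory note: you do not exhibit the perturbation $P$, nor prove one exists for every $f$, nor explain how an algorithm finds it. Worse, the specific shape you suggest, replacing $-y$ by $-y\,P(\vec x,y)$, undermines step (i): an integer zero now gives $(1+c^d)f(\vec a)=c\,P(\vec a,c)$, and the coprimality of $1+c^d$ with $c$ no longer forces $1+c^d=\pm1$, since $1+c^d$ may divide $P(\vec a,c)$. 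The degree requirement (iii) also has to be rechecked for the modified $F$. Finally, the integrality claim is unsupported: writing $F=f\,y^d-y+f$, the $y$-coefficients are $f,0,\dots,0,-1,f$, so no prime of $\Qbar[\vec x]$ divides all non-leading coefficients and Eisenstein does not apply; and smoothness alone does not give integrality, since a smooth hypersurface can be disconnected (e.g.\ $x(x-1)=0$ in $\Aff^2$).

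For comparison, the paper takes $F=c(y^2-y)+f(\vec x)^2$ with $c\in\Z_{>0}$ a parameter. Property (i) is immediate because $y^2-y$ and $f^2$ are nonnegative on integers, so an integral zero forces both to vanish. Smoothness is where the parameter earns its keep: the singular locus always lies in $\{2y-1=0\}$ (from $\partial F/\partial y=0$), while Bertini applied to the pencil spanned by $y^2-y$ and $f^2$ shows that for all but finitely many $c$ it also lies in $\{y^2-y=0\}$; these loci are disjoint, so the singular locus is empty for all but finitely many $c$, and the algorithm simply tests $c=1,2,\dots$ until smoothness (a decidable condition) holds. Geometric integrality then comes from the special quadratic structure: completing the square puts $F$ in the form $z^2-g$, any factorization would be $(z+h)(z-h)$ with $h$ nonconstant, and the common zeros of $z$ and $h$ would be singular points, contradicting smoothness. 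If you want to salvage your construction you need an analogue of that parameter-and-search mechanism (some computable family of modifications, only finitely many of which can fail for a given $f$) together with an integrality argument not based on Eisenstein; as written, both (ii) claims are unproven.
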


\begin{proof}
Consider $F(x_1,\ldots,x_n,y)=c (y^2-y) + f(x_1,\ldots,x_n)^2$
for some $c \in \Z_{>0}$.
The values of $y^2-y$ and $f(x_1,\ldots,x_n)^2$ on integer inputs
are nonnegative, so (i) is satisfied.
The singular locus $S$ of $X$
is contained in the locus where $\del F/\del y=0$,
which is $2y-1=0$ in $\Aff^N$.
On the other hand, 
Bertini's theorem (\cite{Hartshorne1977}*{Remark~III.10.9.2}) 
shows that $S$ is contained in $y^2-y=0$ for all but finitely many $c$.
In this case $S=\emptyset$,
so $X$ is smooth over $\Q$.
By testing $c=1,2,\ldots$ in turn, we can effectively find 
the first $c$ for which $X$ is smooth over $\Q$.

This $X$ is also geometrically integral:
since $X$ is isomorphic to a variety of the form $z^2-g=0$
for some nonconstant $g \in \Z[x_1,\ldots,x_n]$,
if it were not geometrically integral, $z^2-g$ would factor
as $(z+h)(z-h)$ for some nonconstant $h \in \Qbar[x_1,\ldots,x_n]$,
but then $X$ would have to be singular at the common zeros of $z$ and $h$,
a contradiction.
\end{proof}

\begin{lemma}
\label{L:resolution}
There is an algorithm that, 
given an affine scheme $U$ of finite type over $\Z$
whose generic fiber $U_\Q$ is smooth over $\Q$,
constructs $n \in \Z_{>0}$ and a closed immersion $U \injects \Aff^n_\Z$ 
such that the projective closure of the generic fiber $U_\Q$ in $\PP^n_\Q$
is smooth.
\end{lemma}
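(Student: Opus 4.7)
My plan would be to enumerate closed immersions $U \hookrightarrow \Aff^N_\Z$ systematically and test each for smoothness of the generic-fiber projective closure, halting at the first success. Concretely, for $m = 0, 1, 2, \ldots$ I would enumerate tuples $(w_1, \ldots, w_m) \in \Z[U]^m$; each produces a closed immersion $U \hookrightarrow \Aff^{n+m}_\Z$ via $(x_1, \ldots, x_n, w_1, \ldots, w_m)$, and it is a closed immersion because $x_1, \ldots, x_n$ already generate $\Z[U]$ as a $\Z$-algebra. For each candidate one can compute the ideal of the generic-fiber projective closure by homogenization (via Gr\"obner bases) and decide smoothness of the resulting projective variety by the Jacobian criterion. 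This search is entirely algorithmic; the remaining issue is to show that it terminates, which reduces to the existence of at least one good tuple.

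For existence I would argue as follows. Let $\bar U_\Q \subset \PP^n_\Q$ be the projective closure of $U_\Q$ under the given embedding; its singular locus lies in $\bar U_\Q \setminus U_\Q$ since $U_\Q$ is smooth. First, apply Hironaka's resolution of singularities over $\Q$ to produce a smooth projective $Y_0 \to \bar U_\Q$ that is an isomorphism over $U_\Q$. Then, since $U_\Q$ is affine, Goodman's theorem permits further blowups of $Y_0$ along closed subschemes contained in the boundary, with re-resolutions as needed, to produce a smooth projective $Y$ on which the reduced boundary $D := Y \setminus U_\Q$ is ample. Choose $k$ large enough that $\OO_Y(kD)$ is very ample and that each $s_0^k x_i$ is a global section of $\OO_Y(kD)$, where $s_0$ denotes the defining section of $D$ and the $x_i$ are viewed as rational functions on $Y$. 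Extend $(s_0^k, s_0^k x_1, \ldots, s_0^k x_n)$ to a $\Q$-basis $(s_0^k, s_0^k x_1, \ldots, s_0^k x_n, u_1, \ldots, u_r)$ of $H^0(Y, \OO_Y(kD))$. The resulting closed immersion $Y \hookrightarrow \PP^{n+r}_\Q$ sends the hyperplane $\{s_0^k = 0\}$ to $kD$, so its affine complement equals $U_\Q$ with affine coordinates $(x_1, \ldots, x_n, y_1, \ldots, y_r)$ where $y_j := u_j / s_0^k \in \Q[U_\Q]$. Choosing $N_j \in \Z_{>0}$ with $w_j := N_j y_j \in \Z[U]$ yields a tuple $(x_1, \ldots, x_n, w_1, \ldots, w_r)$ whose corresponding $\Q$-embedding differs from the $Y$-embedding by a diagonal automorphism of $\PP^{n+r}_\Q$; its generic-fiber projective closure is therefore isomorphic to $Y$ and hence smooth.

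The hard part will be establishing existence: it rests on the deep theorems of Hironaka (resolution of singularities in characteristic zero) and Goodman (ample compactifications of affine varieties). The enumerate-and-test strategy has the virtue of cleanly separating these existential inputs from the algorithmic aspect, so effective or constructive versions of Hironaka's and Goodman's theorems are not required.
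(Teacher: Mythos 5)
Your overall strategy is sound and genuinely different from the paper's. The paper constructs the re-embedding directly and (it asserts) effectively: it resolves the projective closure $X_\Q$ of $U_\Q$ by blowing up an ideal sheaf supported in the boundary $H_\Q$, notes that $-E$ is relatively ample for the blowup $\pi$, and re-embeds by a multiple of $d\pi^*H-E$, which for $d\gg 0$ is ample with support exactly $\pi^{-1}(H)$. You instead enumerate candidate closed immersions $(x_1,\ldots,x_n,w_1,\ldots,w_m)$ and certify smoothness of the generic-fiber projective closure by Gr\"obner bases and the Jacobian criterion, so that only a bare existence statement is needed for termination; this cleanly sidesteps any appeal to effectivity of resolution of singularities, at the price of an unbounded search (harmless for the lemma as stated). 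The verification half of your argument is fine, including the passage from the $\Q$-embedding back to a $\Z$-tuple by clearing denominators and applying a diagonal automorphism of $\PP^{n+r}_\Q$. (One small point: if $1,x_1,\ldots,x_n$ are linearly dependent in $\Q[U_\Q]$ you cannot ``extend to a basis''; use the full, possibly dependent, spanning list of sections instead---the induced map is still a closed immersion.)

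The one step that needs repair is the existence claim that ``Goodman's theorem permits further blowups \ldots with re-resolutions as needed, to produce a smooth projective $Y$ on which the reduced boundary $D$ is ample.'' First, Goodman gives an effective ample divisor whose support is the boundary, not ampleness of the reduced boundary itself (which can fail); your argument only needs the former, so take $s_0$ to cut out that effective divisor. Second, and more seriously, Goodman's blowup of $Y_0$ may be singular, and after re-resolving, the pullback of the ample boundary divisor is only nef---it is trivial on curves contracted by the resolution---so the smooth model is not known to have an ample boundary-supported divisor without a further argument. The fix is exactly the paper's key trick: arrange the resolution $\rho\colon Y\to Y_1$ to be the blowup of an ideal sheaf supported in the boundary, so that $-E$ is $\rho$-ample, and take $d\rho^*A-E$ for $d\gg 0$, which is ample, effective, and supported exactly on $Y\setminus U_\Q$ (the content of the paper's citations to EGA II, 8.1.8 and 4.4.10(ii)). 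Alternatively you can drop Goodman altogether: the boundary of the original projective closure $\bar U_\Q\subset\PP^n_\Q$ already supports the ample hyperplane section, so the same trick applied to a boundary-supported resolution of $\bar U_\Q$ gives the desired $Y$---which is precisely the paper's construction. With that step either proved as above or replaced by a citation to a smooth form of Goodman's theorem in characteristic zero, your enumerate-and-certify algorithm terminates and the proof is complete.
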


\begin{proof}
(This proof grew out of a discussion with 
Andrew Kresch, Florian Pop, and Yuri Tschinkel.)
Embed $U$ as a closed subscheme of some $\Aff^m_\Z$,
and let $X$ be its closure in $\PP^m_\Z$.
Let $H=X-U$.
Resolution of singularities lets us construct a coherent sheaf of ideals
$\calI_\Q$ on $X_\Q$ with support contained in $H_\Q$
such that blowing up $X_\Q$ along $\calI_\Q$
yields a smooth $\Q$-scheme.
Let $\calI := \calI_\Q \intersect \OO_X$ in $\OO_{X_\Q}$,
so $\calI$ is a coherent sheaf of ideals on $X$
such that blowing up $X$ along $\calI$ yields $X' \stackrel{\pi}\to X$
with $X'_\Q$ smooth over $\Q$.
Let $E$ be the exceptional divisor on $X'$.
By \cite{EGA-II}*{8.1.8}, the Cartier divisor $-E$ 
(corresponding to the invertible sheaf $\calI \OO_{X'}$)
is ample relative to $X' \to X$.
By \cite{EGA-II}*{4.4.10(ii)},
for sufficiently large $d$, the divisor $D:=d \pi^*H - E$ on $X'$
is ample and has support equal to the closed subset $\pi^{-1}(H)$.
So a multiple of $D$ determines a closed immersion $X' \injects \PP^n_\Z$
such that $X' \intersect \Aff^n_\Z = \pi^{-1}(U) \isom U$.
All these constructions can be made effective.
\end{proof}

Combining the previous two lemmas with the negative solution
to Hilbert's tenth problem yields:

\begin{corollary}
\label{C:H10 for smooth varieties}
There is no algorithm that,
given $f_1,\ldots,f_m \in \Z[x_1,\ldots,x_n]$ 
such that 
the projective closure of $\Spec \Q[x_1,\ldots,x_n]/(f_1,\ldots,f_m)$
in $\PP^n_\Q$ is smooth and geometrically integral over $\Q$,
decides whether $f_1(\vec{a})=\cdots=f_m(\vec{a})=0$
has a solution $\vec{a} \in \Z^n$.
\end{corollary}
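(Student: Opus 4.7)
The plan is to chain Lemma~\ref{L:smooth affine} and Lemma~\ref{L:resolution} with the negative answer to Hilbert's tenth problem. Given an arbitrary $f \in \Z[x_1,\ldots,x_n]$, I will build an instance of the restricted problem whose integer solvability matches that of $f=0$.

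First I apply Lemma~\ref{L:smooth affine} to $f$ to get $F \in \Z[x_1,\ldots,x_{n+1}]$ such that $U:=\Spec \Z[x_1,\ldots,x_{n+1}]/(F)$ has smooth, geometrically integral generic fiber $U_\Q$, and such that $U(\Z)\neq\emptyset$ iff $f=0$ has a solution in $\Z^n$. Next I feed $U$ into Lemma~\ref{L:resolution}, obtaining an integer $n'$ and a closed immersion $U \injects \Aff^{n'}_\Z$ for which the projective closure $X'$ of $U_\Q$ in $\PP^{n'}_\Q$ is smooth. Extracting a set of defining equations $f_1,\ldots,f_m \in \Z[x_1,\ldots,x_{n'}]$ for the image of this new embedding completes the construction. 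Integer solvability of $f_1=\cdots=f_m=0$ is the condition $U(\Z)\neq\emptyset$, which depends only on the scheme $U$ and not on its embedding, and therefore is equivalent to solvability of $f=0$.

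It remains to verify that the projective closure $X'$ is not only smooth but also geometrically integral, since that is part of the hypothesis of the corollary. The open subscheme $U_\Q \subseteq X'$ is dense, and $U_\Q$ is geometrically integral by Lemma~\ref{L:smooth affine}. Base change to $\Qbar$: $X'_{\Qbar}$ is smooth, hence its irreducible components are pairwise disjoint, and the dense connected subset $U_{\Qbar}$ must lie in a single component; thus $X'_{\Qbar}$ is irreducible, and being smooth it is integral. Hence $X'$ is geometrically integral.

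All constructions in Lemmas~\ref{L:smooth affine} and~\ref{L:resolution} are algorithmic, so the reduction $f \mapsto (f_1,\ldots,f_m)$ is computable. A decision procedure for the restricted problem would therefore yield one for the general problem of deciding integer solvability of polynomial equations, contradicting the theorem of Matiyasevich. The main conceptual point that required care was not a computational one but the verification of geometric integrality of the projective closure after resolution; the rest is just plugging the two lemmas together.
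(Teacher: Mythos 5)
Your proposal is correct and follows exactly the paper's (implicit) argument: chain Lemma~\ref{L:smooth affine} with Lemma~\ref{L:resolution} and invoke Matiyasevich, noting that integer solvability is intrinsic to the scheme $U$ and unaffected by the re-embedding. Your extra verification that the projective closure is geometrically integral (which the paper leaves unstated) is valid; note it also follows more directly, without using smoothness, since the closure of the geometrically integral $U_\Q$ is geometrically integral because taking closures commutes with the field extension $\Qbar/\Q$.
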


Applying the construction of Section~\ref{S:construction}
to the smooth projective geometrically integral $\Q$-variety $Z$
arising as the projective closure in 
Corollary~\ref{C:H10 for smooth varieties}
proves Theorem~\ref{T:main}.

%****************************************************************************
\section{Uniformity} 

In this section we prove Theorem~\ref{T:fixed variety}.
In our proof of Theorem~\ref{T:main},
the variety $X$ and the point $x$
depend only on the integer $n$ chosen at the beginning of 
Section~\ref{S:construction}.

The negative solution of Hilbert's tenth problem
shows that there are fixed $m$ and $d$
such that the problem of deciding whether an $m$-variable
polynomial of total degree $d$ is solvable in natural numbers
is undecidable~\cite{Matiyasevich1970}.
Replacing each variable by a sum of squares of four new variables
and applying Lagrange's theorem that every nonnegative integer
is a sum of four squares
shows that the same uniform undecidability holds for solvability 
in integers, provided that we replace $(m,d)$ by $(4m,2d)$.
Combining this with Lemma~\ref{L:smooth affine} yields 
undecidability even if we restrict
to polynomials defining a smooth affine hypersurface over $\Q$,
provided that we replace $(m,d)$ by $(m+1,2d)$.
Because these smooth affine hypersurfaces form an algebraic family,
the resolution of singularities of each projective closure
has bounded complexity for all the hypersurfaces in the family,
so the proof of Lemma~\ref{L:resolution}
re-embeds these hypersurfaces
in a projective space of bounded dimension,
which can then be embedded in a larger projective space 
of {\em fixed} dimension $D$.
Finally we may take $n=D+1$ in Section~\ref{S:construction}.
This completes the proof of Theorem~\ref{T:fixed variety}.

%****************************************************************************
\section*{Acknowledgements} 

It was a discussion with Burt Totaro that inspired the research
leading to this article.
I thank Andrew Kresch, Florian Pop, and Yuri Tschinkel 
for a teatime discussion 
at the Hausdorff Institute for Mathematics 
that led to the proof of Lemma~\ref{L:resolution}.
I thank also Laurent Moret-Bailly for a brief suggestion
regarding references.

\begin{bibdiv}
\begin{biblist}

% \bibselect{big}

\bib{Cremona1997}{book}{
  author={Cremona, J. E.},
  title={Algorithms for modular elliptic curves},
  edition={2},
  publisher={Cambridge University Press},
  place={Cambridge},
  date={1997},
  pages={vi+376},
  isbn={0-521-59820-6},
  review={MR1628193 (99e:11068)},
}

\bib{Davis-Putnam-Robinson1961}{article}{
  author={Davis, Martin},
  author={Putnam, Hilary},
  author={Robinson, Julia},
  title={The decision problem for exponential diophantine equations},
  journal={Ann. of Math. (2)},
  volume={74},
  date={1961},
  pages={425--436},
  issn={0003-486X},
  review={\MR {0133227 (24 \#A3061)}},
}

\bib{EGA-II}{article}{
  author={Grothendieck, A.},
  title={\'El\'ements de g\'eom\'etrie alg\'ebrique. II. \'Etude globale \'el\'ementaire de quelques classes de morphismes},
  journal={Inst. Hautes \'Etudes Sci. Publ. Math.},
  number={8},
  date={1961},
  pages={222},
  issn={0073-8301},
  review={\MR {0217084 (36 \#177b)}},
  label={EGA II},
}

\bib{Eisentraeger2004}{article}{
  author={Eisentr{\"a}ger, Kirsten},
  title={Hilbert's tenth problem for function fields of varieties over $\mathbb C$},
  journal={Int. Math. Res. Not.},
  date={2004},
  number={59},
  pages={3191--3205},
  issn={1073-7928},
  review={\MR {2097039 (2005h:11273)}},
}

\bib{Hartshorne1977}{book}{
  author={Hartshorne, Robin},
  title={Algebraic geometry},
  note={Graduate Texts in Mathematics, No. 52},
  publisher={Springer-Verlag},
  place={New York},
  date={1977},
  pages={xvi+496},
  isbn={0-387-90244-9},
  review={MR0463157 (57 \#3116)},
}

\bib{Kim-Roush1992}{article}{
  author={Kim, K. H.},
  author={Roush, F. W.},
  title={Diophantine undecidability of ${\mathbb C}(t\sb 1,t\sb 2)$},
  journal={J. Algebra},
  volume={150},
  date={1992},
  number={1},
  pages={35--44},
  issn={0021-8693},
  review={\MR {1174886 (93h:03062)}},
}

\bib{Matiyasevich1970}{article}{
  author={Matiyasevich, Yu.},
  title={The Diophantineness of enumerable sets},
  language={Russian},
  journal={Dokl. Akad. Nauk SSSR},
  volume={191},
  date={1970},
  pages={279--282},
  issn={0002-3264},
  review={\MR {0258744 (41 \#3390)}},
}

\end{biblist}
\end{bibdiv}

\end{document}